\documentclass[12pt, a4paper]{article}
\usepackage{mathtools}
\usepackage{amsthm}
\usepackage{amsmath}
\usepackage{amssymb}
\usepackage{mathrsfs}
\usepackage{geometry}

\usepackage[shortlabels]{enumitem} 
\usepackage[mathscr]{euscript}
\usepackage{multicol}

\usepackage{parskip} 

\usepackage{tikz-cd} 

\usepackage{todonotes}

\usepackage{centernot}

\usepackage{color} 

\usepackage{authblk} 

\usepackage{appendix} 

\usepackage[style=trad-alpha]{biblatex} 
\addbibresource{project.may.2021.bibliography.bib} 


\newtheorem{theorem}{Theorem}[section] 

\theoremstyle{definition}
\newtheorem{definition}[theorem]{Definition} 



\newtheorem{lemma}[theorem]{Lemma}

\numberwithin{equation}{subsection}

\theoremstyle{remark}
\newtheorem{remark}[theorem]{Remark} 

\makeatletter
\makeatother
\numberwithin{equation}{section}



\DeclareMathOperator{\im}{im}

\DeclareMathOperator{\Hom}{Hom}

\DeclareMathOperator{\ind}{Ind}

\DeclareMathOperator{\codim}{codim}

\DeclareMathOperator{\dr}{dR} 
\DeclareMathOperator{\ord}{ord} 

\DeclareMathOperator{\res}{Res} 
\DeclareMathOperator{\fil}{Fil}
\DeclareMathOperator{\Conj}{conj}
\DeclareMathOperator{\sbt}{Sbt} 
\DeclareMathOperator{\fr}{Fr} 

\newcommand{\fkm}{\mathfrak{m}}

\newcommand{\gr}{\text{Gr}}

\newcommand{\gzip}{\text{$G$-$\mathtt{Zip}$}} 
\newcommand{\gzipf}{\text{$G$-$\mathtt{ZipFlag}$}} 

\newcommand{\zip}{\text{-$\mathtt{Zip}$}}
\newcommand{\zipf}{\text{-$\mathtt{ZipFlag}$}}

\newcommand{\calo}{\mathcal{O}}

\newcommand{\call}{\mathcal{L}}

\newcommand{\mz}{\mathbb{Z}}
\newcommand{\mq}{\mathbb{Q}}
\newcommand{\mr}{\mathbb{R}}
\newcommand{\mc}{\mathbb{C}}

\newcommand{\mg}{\mathbb{G}}


\usepackage{hyperref}
\hypersetup{
    colorlinks=true,
    linkcolor=blue,
    filecolor=magenta,      
    urlcolor=cyan,
}
 
\urlstyle{same}

\providecommand{\keywords}[1]
{
  \small	
  \textbf{\textit{Keywords---}} #1
}

\title{On the Hasse invariant 
of Hilbert modular varieties mod~$p$.} 
\author{Stefan Reppen\thanks{stefan.reppen@math.su.se}}
\affil{Department of Mathematics, Stockholm University, Stockholm, Sweden}
\date{\vspace{-5ex}}

\begin{document}

\maketitle

\begin{abstract}
    Let $F$ be a totally real field and let $S$ denote the geometric special fiber of a Hilbert modular variety associated to $F$, at a prime unramified in $F$. 
    We show that the order of vanishing of the Hasse invariant on $S$ is equal to the largest integer $m$ such that the smallest piece of the conjugate filtration lies in the $m^{\text{th}}$ piece of the Hodge filtration. 
    This result is a direct analogue of Ogus' on families of Calabi--Yau varieties in positive characteristic (see \cite{ogus}). We also show that the order of vanishing at a point is the same as the codimension of the Ekedahl-Oort stratum containing it.
\end{abstract}

\keywords{Shimura varieties, Hasse invariants, Hodge filtration, $G$-zips}

\section{Introduction}
The Hasse invariant was introduced by Hasse in the 1930's as a binary invariant of elliptic curves in characteristic $p>0$. Nowadays, the classical Hasse invariant is often described as a modular form mod $p$ of weight $p-1$, i.e., as a section of the $(p-1)^{\text{th}}$ power of the Hodge line bundle on the geometric special fiber of a Siegel modular variety at a prime $p$. Recalling that a Siegel modular variety is a moduli space for certain abelian varieties, the non-vanishing set of the Hasse invariant consists exactly of the ordinary abelian varieties (i.e., those whose $p$-rank is maximal). 
The Hasse invariant satisfies other desirable properties; its construction is compatible with varying the prime to $p$ level, some power of it lifts to characteristic 0, and some power extends to the minimal compactification. Several authors have also constructed generalisations of the classical Hasse invariant enjoying similar properties to those described above (see e.g., \cite{goldring.nicole}, 
\cite{koskivirta.wedhorn}, \cite{goren}). Thanks to these properties, the Hasse invariant and its generalisations have been used succesfully to produce congruences between automorphic forms and automorphic Galois representations in the Langlands program (see e.g., \cite{emerton.et.al}, \cite{taylor}, \cite{wushi.invent}). In fact, the idea of using the Hasse invariant to this end goes back to the work of Deligne--Serre in the 1970's on modular forms of weight 1 (\cite{deligne.serre}). The theory of Hasse invariants has also been used to study the geometry of Shimura varieties mod $p$. For instance, in \cite{wushi.invent} it is shown that the Ekedahl--Oort (E--O) stratification of a Hodge-type Shimura variety is uniformly principally pure (see \cite{goldring.koskivirta.flag} for a definition), and that each stratum in the E--O stratification of the minimal compactification is affine.
 
The definition of the Hasse invariant also makes sense for other families of varieties, such as Calabi--Yau varieties mod $p$. For such a family, Ogus shows in \cite{ogus} that the order of vanishing of the Hasse invariant is closely related to the relative position of the Hodge and conjugate filtrations. More precisely, he shows that the Hasse invariant vanishes to order $m$ at a given point of the base if and only if $m$ is the largest integer such that the smallest piece of the conjugate filtration lies in the $m^{\text{th}}$ piece of the Hodge filtration.

The current paper is inspired by Ogus' result, based on which we ask if a similar statement holds for certain Shimura varieties. We prove a slightly stronger result in the case of Hilbert modular varieties. This gives some evidence that similar statements might hold for all Hodge type Shimura varieties and in upcoming work we show analogous results for some unitary and Siegel Shimura varieties. The method used for the unitary Shimura varieties is the same as the one used in this paper.
%

In short,
let $F$ denote a totally real field of degree $n = [F : \mq]$.  Let $S$ denote the geometric special fiber of a Hilbert modular variety associated to $F$, at a prime unramified in $F$, and let $\pi \colon A\to S$ denote the universal abelian variety over $S$. 
Let also  $\fil^{\bullet}H_{\dr}^{n}$ denote the Hodge filtration on $H_{\dr}^{n}(A/S)$ and let ${}_{\Conj}\fil_{\bullet}H_{\dr}^{n}$ denote the conjugate filtration. In this text we prove the following statement.
\begin{theorem}\label{main.theorem}
For any point $s\in S$, the following are equivalent:
\begin{enumerate}
    \item The point $s$ lies in a codimension $m$ stratum of the Ekedahl--Oort decomposition of $S$.
    \item The Hasse invariant vanishes to order $m$ at $s$.
    \item The integer $m$ is the largest integer such that ${}_{\Conj}\fil_{0}H_{\dr}^{n}(A_s) \subseteq \fil^{m}H_{\dr}^{n}(A_s)$.
\end{enumerate}
\end{theorem}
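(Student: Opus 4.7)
The plan is to decompose every object appearing in the statement according to the $\mathcal{O}_F$-action on $A$, reduce the problem to a line-in-a-rank-$2$-bundle situation for each embedding of $F$ into $\bar{\mathbb{F}}_p$, and then compare the three invariants component-by-component. Since $p$ is unramified in $F$, the identification $\mathcal{O}_F \otimes_{\mathbb{Z}} \mathcal{O}_S = \prod_{\sigma \in \Sigma} \mathcal{O}_S$ (with $\Sigma = \Hom(F, \bar{\mathbb{F}}_p)$) splits $H := H^1_{\dr}(A/S) = \bigoplus_{\sigma} H_\sigma$ with each $H_\sigma$ locally free of rank $2$. Both filtrations respect this splitting, yielding line subbundles $\omega_\sigma, \nu_\sigma \subseteq H_\sigma$ for each $\sigma$.

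For (1)$\Leftrightarrow$(2), I would invoke the Goren--Oort description of the E--O stratification: the strata are indexed by subsets $T \subseteq \Sigma$, with
\[
S_T = \{s \in S : \nu_\sigma(s) = \omega_\sigma(s) \Leftrightarrow \sigma \in T\}
\]
locally closed of codimension $|T|$, and $\overline{S_T} = \bigcap_{\sigma \in T} Z_\sigma$ where $Z_\sigma := \{\nu_\sigma = \omega_\sigma\}$ is a smooth divisor; the $Z_\sigma$ meet transversally. For each $\sigma$ there is a partial Hasse invariant $h_\sigma \in H^0(S, \omega_\sigma^p \otimes \omega_{F \circ \sigma}^{-1})$ with $\Div(h_\sigma) = Z_\sigma$ reduced, and since $\sigma \mapsto F \circ \sigma$ permutes $\Sigma$, their product $\prod_\sigma h_\sigma$ lies in $\omega^{p-1}$ and coincides with the classical Hasse invariant $\text{Ha}$. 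By transversality, $\ord_s(h_\sigma) = 1$ for each $\sigma \in T(s)$ and $0$ otherwise, so $\ord_s(\text{Ha}) = |T(s)|$, where $T(s)$ is the unique $T$ with $s \in S_T$.

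For (1)$\Leftrightarrow$(3), taking $\bigwedge^n$ of $H = \bigoplus_\sigma H_\sigma$ produces $\bigotimes_\sigma H_\sigma$ as a distinguished rank-$2^n$ direct summand of $H^n_{\dr}(A)$. This summand contains both
\[
\fil^n H^n_{\dr}(A) = \bigotimes_\sigma \omega_\sigma \qquad \text{and} \qquad {}_{\Conj}\fil_0 H^n_{\dr}(A) = \bigotimes_\sigma \nu_\sigma,
\]
the latter via the identification ${}_{\Conj}\fil_0 H^n_{\dr}(A) = F_S^* H^n(A, \mathcal{O}_A) = F_S^* \bigwedge^n H^1(A, \mathcal{O}_A)$ together with the $\sigma$-decomposition of $H^1(A, \mathcal{O}_A)$. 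Restricted to this summand, the Hodge filtration equals the tensor product of the Hodge filtrations on the $H_\sigma$, so $\fil^m H^n_{\dr}(A) \cap \bigotimes_\sigma H_\sigma$ is spanned by simple tensors with at least $m$ factors in $\omega$. Fix $s \in S_T$: for $\sigma \in T$ one has $\nu_\sigma(s) = \omega_\sigma(s)$, while for $\sigma \notin T$ one can write $\nu_\sigma(s) = \langle a_\sigma \omega_\sigma(s) + b_\sigma u_\sigma(s) \rangle$ with $b_\sigma \neq 0$, where $u_\sigma(s) \in H_\sigma(s)$ lifts a basis of $H_\sigma(s)/\omega_\sigma(s)$. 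Expanding $\bigotimes_\sigma \nu_\sigma(s)$ exhibits its minimal Hodge degree as exactly $|T|$, realised with nonzero coefficient by the single term $\bigotimes_{\sigma \in T}\omega_\sigma(s) \otimes \bigotimes_{\sigma \notin T} b_\sigma u_\sigma(s)$. Hence ${}_{\Conj}\fil_0 H^n_{\dr}(A_s) \subseteq \fil^m H^n_{\dr}(A_s)$ iff $m \leq |T(s)|$, as required.

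The main technical point is the second step: constructing the partial Hasse invariants with the correct line-bundle weights and reduced vanishing along each $Z_\sigma$ (even at points of deeper strata, which relies on the transversal intersection of the $Z_\sigma$), and verifying that their product equals the classical Hasse invariant. Once these inputs are in hand from the theory of partial Hasse invariants for Hilbert modular varieties, the remaining linear-algebra computation inside the $\bigotimes_\sigma H_\sigma$ summand is routine.
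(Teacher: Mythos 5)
Your proof is correct, but it takes a genuinely different route from the paper's for the equivalence $(1)\Leftrightarrow(2)$, while your argument for $(1)\Leftrightarrow(3)$ is essentially the same linear algebra as the paper's proof of $(2)\Leftrightarrow(3)$ (decompose $H^1_{\dr}$ into rank-$2$ pieces via $\mathcal{O}_F\otimes k$, tensor up to $H^n_{\dr}$, and track the minimal Hodge degree of $\bigotimes_\sigma\nu_\sigma(s)$). For $(1)\Leftrightarrow(2)$ you import the Goren--Oort package as a black box: partial Hasse invariants $h_\sigma$ factoring $\mathrm{Ha}$, with $\mathrm{Div}(h_\sigma)=Z_\sigma$ reduced and smooth, and $\overline{S_T}=\bigcap_{\sigma\in T}Z_\sigma$; additivity of $\ord_s$ on products (valid since $S$ is regular) then gives $\ord_s(\mathrm{Ha})=|T(s)|$. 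The paper's Remark \ref{w.r.t.goren} explicitly acknowledges this route as available but not pursued by Goren. The paper instead descends to the Schubert stack: the Hasse invariant is pulled back along the smooth chain $S\to\gzip^\mu\to\sbt$ from a section $h_{\sbt}$ which, under $G_k/B_k\cong\prod_{i=1}^n\mathbb{P}^1_k$, is the monomial $\prod_{i=1}^n x_{i0}$, whose order of vanishing on each Bruhat cell is read off by inspection. The facts you import from Goren (reducedness, vanishing to order exactly $1$) the paper derives internally from this monomial description (Remark \ref{partial.hasse}), so its argument is self-contained given the $G$-zip formalism and, per the introduction, is designed to generalize to Hodge-type Shimura varieties where a Goren--Oort-style smoothness/transversality statement is not available off the shelf. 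One small precision on your write-up: the claim $\ord_s(h_\sigma)=1$ for $\sigma\in T(s)$ needs only that $Z_\sigma$ is a reduced smooth divisor through $s$, not transversality of the whole family $\{Z_\sigma\}$; transversality is what identifies $\codim S_{T(s)}$ with $|T(s)|$, so it enters on the other side of the equivalence.
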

\begin{remark}
The equivalence of 2. and 3. is the analogue of Ogus' result for families of Calabi--Yau varieties.
\end{remark}
\begin{remark}
In \cite{geer.katsura} van der Geer and Katsura asked for a geometric interpretation of the largest integer $m$ such that ${}_{\Conj}\fil_{0}H_{\dr}^{n}(X) \subseteq \fil^{m}H_{\dr}^{n}(X)$, for a smooth complete algebraic variety $X$ of dimension $n$. Theorem \ref{main.theorem} together with the aforementioned result of Ogus and our upcoming work suggests that for families of smooth complete algebraic varieties where the Hasse invariant is defined, the order of vanishing of the Hasse invariant could be one geometric interpretation of the integer in question.
\end{remark}
Our approach is quite different from that of Ogus, which relies on crystalline methods and the Gauss--Manin connection. 
We instead use the theory of $G$-zips, introduced by Moonen, Pink, Wedhorn and Ziegler (see \cite{moonen.wedhorn}, \cite{pink.wedhorn.ziegler.1} and \cite{pink.wedhorn.ziegler.2}) and further developed by Goldring and Koskivirta (see \cite{goldring.koskivirta.flag}).
In doing so, we can reduce the study to a question about a section of a line bundle on a product of projective lines. 
In particular, we can immediatelly read off the order of vanishing from the corresponding bihomogeneous polynomial, and there is no need to study in detail any local rings of the Shimura variety itself.


The outline of the text is as follows. Since the Hasse invariant is pulled back from sections of line bundles on the stack of $G$-zips, the stack of $G$-zip flags and the Schubert stack, we begin by introducing these stacks (Section \ref{section.three.stacks}) and the associated sheaves construction on them (Section \ref{section.ass.sheaves}). Next we briefly recall the notions of Hodge type Shimura varieties and introduce the Hodge character (Section \ref{section.shimura.var}). In the final section we prove the main theorem (Section \ref{section.proof.of.theorem}).
%
%
%
%
%
%
%
%
%
%
%
%
%
%
\section{Three stacks: $\gzip^{\mu}$, $\gzipf^{\mu}$, $\sbt$}\label{section.three.stacks}
In this section, let $p$ be a prime and $k$ an algebraic closure of $\mathbb{F}_p$. Let $G$ be a reductive group over $\mathbb{F}_p$. For any $k$-scheme $X$, let $X^{(p)}$ denote the fiber product of $X$ with $k$ along the absolute Frobenius of $k$.

Given a cocharacter $\mu \colon \mg_{m,k}\to G_k$ we associate a pair of opposite parabolic subgroups, $(P, P^{+})$, as follows; $P$ (respectively $P^{+}$) is the parabolic whose Lie algebra is the sum of the non-positive (respectively non-negative) weight spaces of $\text{Lie}(G)$ under the action of $\text{Ad}\circ \mu$. 
 This defines a Levi subgroup, $L \coloneqq P\cap P^{+}$. We may and will also assume that there is a Borel $B$ and a maximal torus $T$ such that $T_k \subseteq B_k \subseteq P$.

The datum of a reductive group $G$ over $\mathbb{F}_p$ and a cocharacter $\mu \colon \mg_{m,k}\to G_k$ is called a {\bf cocharacter datum}. To such a pair we will associate certain stacks in a manner described below.

\subsection{The stack of $G$-zips and $G$-zip flags}
Let $(G,\mu)$ be a cocharacter datum and let $P,P^{+}$ and $L$ be as above. For an algebraic group $H$ let $R_u(H)$ denote the unipotent radical of $H$. 
Given a $k$-scheme $S$ we have the following definition.
\begin{definition}
A {\bf $G$-zip of type $\mu$} over $S$ is a tuple $\underline{I} = (I, I_{P}, I_{(P^{+})^{(p)}}, \iota)$, where $I$ is a $G_k$-torsor over $S$, $I_{P}, I_{(P^{+})^{(p)}}\subseteq I$ are $P$ respectively $(P^{+})^{(p)}$-torsors, and $\iota : (I_P)^{(p)}/R_u(P)^{(p)} \to I_{(P^{+})^{(p)}}/R_u((P^{+})^{(p)})$ is an isomorphism of $L^{(p)}$-torsors.
\end{definition}
We denote by $\gzip^{\mu}(S)$ the category of $G$-zips of type $\mu$ over $S$. This gives a category fibered over the category of $k$-schemes, which we denote by $\gzip^{\mu}$. It is a smooth stack of dimension 0 over $k$ \cite{pink.wedhorn.ziegler.2}.

\begin{remark}\label{g-zip.field.of.def}
If $\mu$ is defined over some finite extension $\kappa$ of $\mathbb{F}_p$, then by a similar construction we obtain a stack of $G$-zips defined over $\kappa$, which is smooth and of dimension 0 (ibid). Since our main result concerns the geometric special fiber of Hilbert modular varieties, we only study stacks of $G$-zips over $k$, and for simplicity we have chosen to omit $k$ in the notation of the stack of $G$-zips.
\end{remark}

Given a Borel pair $T\subseteq B \subseteq G$ such that $B_k\subseteq P$, we also define:
\begin{definition}
A {\bf $G$-zip flag of type $\mu$} over $S$ is a $G$-zip, $\underline{I}= (I, I_P, I_{(P^{+})^{(p)}}, \iota)$ together with a $B_k$-torsor $J \subseteq I_{P}$.
\end{definition}
Again we obtain a smooth stack, denoted $\gzipf^{\mu}$, where $\gzipf^{\mu}(S)$ is the category of $G$-zip flags of type $\mu$ over $S$.
\subsubsection{$\gzip^{\mu}$ and $\gzipf^{\mu}$ are quotient stacks}
As the header suggests, we can represent both $\gzip^{\mu}$ and $\gzipf^{\mu}$ as quotient stacks. This will be important because via the associated sheaves construction (explained below) we can then obtain vector bundles on the stacks from representations of certain groups. 

To express $G\zip^{\mu}$ as a quotient stack, let $E$ be the group defined by the following Cartesian square,
\begin{equation}
    \begin{tikzcd}
    E \arrow[rr] \arrow[d] & & (P^{+})^{(p)} \arrow[d] \\
    P \arrow[r] & P/R_u(P) \cong L \arrow[r, "\fr"] & L^{(p)} \cong (P^{+})^{(p)}/(R_u((P^{+})^{(p)}))
    \end{tikzcd},
\end{equation}
where $\text{Fr}$ denotes the Frobenius homomorphism and the maps $P\to L$ and $(P^+)^{(p)}\to (P^{+})^{(p)}/(R_{u}((P^{+})^{(p)}))$ are the natural projections. 
Less obscurely, $E$ is given by
\begin{equation}
     E = \{ (a,b) \in P\times (P^{+})^{(p)} : \fr(\overline{a}) = \overline{b} \},
\end{equation}
where $\overline{(-)}$ denotes the natural quotient maps depicted in the diagram above. The group $E$ acts on $G_k$ by $(a,b)\cdot g \coloneqq agb^{-1}$, and the group $E\times B_k$ acts on $G_k\times P$ by $((a,b),c)\cdot (g,r)\coloneqq (agb^{-1}, arc^{-1})$. We have the following theorem:
\begin{theorem}
There is a commutative diagram
\begin{equation}
    \begin{tikzcd}
    \gzipf^{\mu} \arrow[r, "\pi"] \arrow[d, "\cong"] & \gzip^{\mu} \arrow[d, "\cong"] \\
    \left[(E\times B_k)\backslash (G_k\times P)\right] \arrow[r, "\pi'"] & \left[ E\backslash G_k \right]
    \end{tikzcd},
\end{equation}
where the vertical maps are isomorphisms and the horizontal maps are the natural projections obtained from $(\underline{I},J)\mapsto \underline{I}$ and $G_k\times P\to G_k$ and $E\times B_k \to E$, respectively.
\end{theorem}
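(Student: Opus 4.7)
The plan is to exhibit explicit functors in both directions realizing each vertical isomorphism and then to observe that the horizontal projections intertwine them. The isomorphism $\gzip^{\mu}\cong [E\backslash G_k]$ is the main theorem of Pink--Wedhorn--Ziegler \cite{pink.wedhorn.ziegler.1}, so I would simply invoke it and concentrate on the flag version together with the commutativity of the square.

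For the flag version, I would define a functor $G_k\times P\to \gzipf^{\mu}$ on $S$-points by sending $(g,r)\colon S\to G_k\times P$ to the $G$-zip flag whose underlying $G$-zip is the standard one attached to $g$, namely the trivial torsor $I=G_k\times S$, the $P$-torsor $I_P=gP$, the $(P^+)^{(p)}$-torsor $I_{(P^+)^{(p)}}=(P^+)^{(p)}$ inside $I^{(p)}$, and the isomorphism $\iota$ induced by Frobenius and the class of $g$, together with the $B_k$-torsor $J=grB_k\subseteq gP=I_P$. Conversely, given a $G$-zip flag $(\underline{I},J)$ locally trivialized, one recovers $g\in G_k(S)$ from the $G$-zip and $r\in P(S)$ from the inclusion $J\subseteq I_P$.

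The next step is to identify $2$-isomorphisms on both sides. A routine computation shows that an isomorphism between the flags attached to $(g,r)$ and $(g',r')$ is exactly the data of $((a,b),c)\in (E\times B_k)(S)$ with $(g',r')=(agb^{-1},arc^{-1})$: the condition $(a,b)\in E$ is forced by compatibility with $\iota$ on the underlying $G$-zips, while $c\in B_k$ encodes the ambiguity of the $B_k$-torsor $J$ inside $I_P$. Essential surjectivity of the functor reduces to the fact that $G_k$-, $P$-, and $B_k$-torsors are étale locally trivial on any $k$-scheme $S$. This yields the desired isomorphism $\gzipf^{\mu}\cong [(E\times B_k)\backslash (G_k\times P)]$. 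Commutativity of the square is then immediate from the construction: the forgetful morphism $(\underline{I},J)\mapsto \underline{I}$ and the projection $G_k\times P\to G_k$ are compatible with the group projection $E\times B_k\to E$.

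The main obstacle will be bookkeeping the conventions for left versus right actions and the placement of the Frobenius twist, so that the resulting $(E\times B_k)$-action on $G_k\times P$ is precisely $((a,b),c)\cdot (g,r)=(agb^{-1},arc^{-1})$ rather than some cosmetically different variant, and so that the choice $J=grB_k$ is the one compatible with the formula for $\iota$. Once these conventions are pinned down, the proof is a straightforward extension of the unflagged case of Pink--Wedhorn--Ziegler.
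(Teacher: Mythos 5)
The paper's ``proof'' of this statement is a one-line citation to \cite[Theorem~2.1.2]{wushi.invent}, so you are taking a genuinely different route by reconstructing the argument from scratch. Your overall plan---invoke the Pink--Wedhorn--Ziegler presentation of $\gzip^{\mu}$, augment the standard $G$-zip attached to $g$ by a $B_k$-torsor inside $I_P$, check that $2$-isomorphisms of flags correspond to the $(E\times B_k)$-action, use \'etale local triviality of torsors for essential surjectivity, and observe that the forgetful maps intertwine---is exactly the right strategy and is what the cited reference carries out.

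However, the specific parametrization you propose is inconsistent with the action
$((a,b),c)\cdot(g,r)=(agb^{-1},arc^{-1})$
that the theorem asserts. With your convention $I_P = gP$, $J = grB_k$, an isomorphism of standard $G$-zips $\underline{I}_g \to \underline{I}_{agb^{-1}}$ (left translation by $a$) sends $J = grB_k$ to $agrB_k$, whereas your purported target is
$J' = (agb^{-1})(arc^{-1})B_k$.
Requiring $agrB_k = agb^{-1}arc^{-1}B_k$ forces $r^{-1}b^{-1}ar \in B_k$, which fails already for $g=r=c=1$ unless $a^{-1}b\in B_k$, and that is not implied by $(a,b)\in E$. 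The convention that actually matches the action in the statement puts $g$ on the \emph{other} subtorsor: $I_P = P$, $I_{(P^+)^{(p)}} = gQ$ with $Q=(P^+)^{(p)}$, $\iota$ induced by left translation by $g$, and $J = rB_k \subseteq P = I_P$ with $r\in P$. Then the isomorphism $\underline{I}_g\to\underline{I}_{agb^{-1}}$, being left translation by $a\in P$, preserves $I_P = P$ and carries $rB_k$ to $arB_k = (arc^{-1})B_k$, giving exactly the asserted $(E\times B_k)$-action. Also note that, per the paper's definition, $I_{(P^+)^{(p)}}$ is a subtorsor of $I$ itself, not of $I^{(p)}$ as you wrote. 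You explicitly flag this bookkeeping as ``the main obstacle,'' and indeed it is: as written, the functor $G_k\times P\to\gzipf^{\mu}$ you define does not descend to the quotient by the stated action, so the argument has a real gap until the parametrization is corrected as above.
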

\begin{proof}
See \cite{wushi.invent}, Theorem 2.1.2.
\end{proof}
There is another representation of $G\zipf^{\mu}$ as a quotient stack which will be useful for us. Let $\Phi$ denote the roots with respect to $T$ and let $\Phi^{+}\subseteq \Phi$ be the positive roots determined by $B$ (we declare $\alpha \in \Phi$ to be positive if $U_{-\alpha}\subseteq B$, where $U_{-\alpha}$ is the root group associated to $-\alpha$). Let also $\Delta$ denote the set of simple roots. Further, let $W=W(G_k,T_k)$ be the Weyl group of $G$ with respect to $T$. It is generated by the reflections $s_{\alpha}$, $\alpha \in \Delta$. Recall that the power set of $\Delta$ is in bijection with the set of parabolics of $G$ containing $B$. Under this bijection, if $I^{+}$ denotes the type of $(P^{+})^{(p)}$, let $W_{I^{+}}$ denote the the subgroup of $W$ generated by $s_{\alpha}$ with $\alpha \in I^{+}$. Let $w_0$ denote the longest element of $W$ and let $w_{0,I^{+}}$ denote the longest element of $W_{I^{+}}$. For any $w\in W$, we choose a lift $\Dot{w}\in N_G(T)$ such that $\Dot{(w_1w_2)}=\Dot{w}_1\Dot{w}_2$ whenever $l(w_1w_2)=l(w_1)+l(w_2)$ (this is possible by \cite[Exp. XXIII, Section 6]{sga3}). 
By abuse of notation we will omit the $\cdot$ and denote $\Dot{w}$ simply by $w$; it should be clear from context whether $w$ is viewed as an element of $W$ or of $N_G(T)$. 
Define finally $z \coloneqq w_0w_{0,I^{+}}\in W$. Then one can show that, with $E'\coloneqq E\cap (B_k\times G_k)$, one has $E'\subseteq B_k\times {}^{z}B_k$ and $\gzipf^{\mu}\cong [E'\backslash G_k]$. This will allow us to relate $G\zipf^{\mu}$ with the Schubert stack, introduced next.
\subsection{The Schubert stack}
The group $B_k \times B_k$ acts on $G_k$ via $(a,b)\cdot g = agb^{-1}$ and we call the quotient stack the {\bf Schubert stack}, denoted
\begin{equation}
    \sbt \coloneqq [(B_k \times B_k)\backslash G_k].
\end{equation}
The map $g \mapsto gz$ induces an isomorphism $B_k \times B_k\cong B_k\times {}^{z}B_k$, whence an isomorphism $\sbt \cong [(B_k\times {}^{z}B_k)\backslash G_k]$. Composing with the natural projection $\gzipf^{\mu}\cong [E'\backslash G_k] \to [(B_k\times {}^{z}B_k)\backslash G_k]$ we obtain a smooth morphism
\begin{equation}
    \psi \colon \gzipf^{\mu} \to \sbt.
\end{equation}
Thus, the three stacks fit together in the following diagram
\begin{equation}\label{three.stacks}
    \begin{tikzcd}
    \gzip^{\mu} & \gzipf^{\mu} \arrow[l, "\pi"'] \arrow[d, "\psi"] \\
     & \sbt
    \end{tikzcd}.
\end{equation}
%
%
\subsection{Stratifications}
The Bruhat stratification allows us to write $G$ as the disjoint union of locally closed subschemes,
\begin{equation}\label{g.is.union.of.b}
    G = \coprod_{w\in W} BwB.
\end{equation}
Define $\sbt_w \coloneqq [(B_k\times B_k)\backslash B_kwB_k]$ and $\overline{\sbt}_w \coloneqq [(B_k\times B_k)\backslash \overline{B_kwB_k}]$ and then $\mathcal{X}_w \coloneqq \psi^{-1}(\sbt_w)$ and $\overline{\mathcal{X}}_w \coloneqq \psi^{-1}(\overline{\sbt}_w)$. We obtain thus a stratification of $\sbt$ and $G\zipf^{\mu}$ into locally closed substacks. There is also a related stratification of $G\zip^{\mu}$ into locally closed substacks $[E\backslash G_w]$ for certain locally closed subschemes $G_w\subseteq G$. In the Hilbert case considered in this text, the stack of $G$-zips and the stack of $G$-zip flags coincide and the stratifications agree, hence we simply refer to \cite{wushi.invent} for more details.
%
%
%
%
%
\section{Associated Sheaves}\label{section.ass.sheaves}
Let $X$ be a $k$-scheme and $H$ an algebraic group acting on the right on $X$. Let $\text{pr} \colon X\to [X/H]$ denote the natural projection. For every representation $r \colon H\to GL(V)$ we can form the {\bf associated sheaf} $\mathcal{L}(r)$ on $[X/H]$ defined on open subsets $U\subseteq [X/H]$ as
\begin{equation}
    \mathcal{L}(r)(U) \coloneqq \{ f \in \Hom(\text{pr}^{-1}(U), V) : f(xh) = r(h)^{-1}f(x), \forall x \in X, h \in H \}.
\end{equation}
We can view the right action as a left action via $h x \coloneqq xh^{-1}$ and identify $[H\backslash X]$ with $[X/H]$ under this action. With this notation we see that $f\in \mathcal{L}(r)(U)$ satisfies $f(hx) = r(h)f(x)$.

If $\lambda \colon H \to \mg_m$ is a character of $H$, then $\mathcal{L}(\lambda)$ is a line bundle on $[X/H]$ and we will use the following notation for its set of global sections
\begin{equation}
    H^0(\lambda) \coloneqq H^{0}(X/H, \mathcal{L}(\lambda)) = \{ f \in \mathcal{O}_X(X) : f(xh) = \lambda(h)^{-1}f(x),  \forall x \in X, h \in H \}.
\end{equation}
In particular, if $X = G$ is a reductive group and $H=B$ is a Borel subgroup of $G$ then $H^{0}(\lambda)$ carries an action of $G$ defined by $(g\cdot f)(x) = f(g^{-1}x)$. For a character $\mu$ of $T$ let $H^{0}(\lambda)_{\mu}$ denote the weight space of $\mu$, i.e. 
\begin{equation}
    H^0(\lambda)_\mu = \{ f \in H^0(\lambda) : t\cdot f = \mu(t) f, \, \forall t \in T \}.
\end{equation}
\subsection{Vector bundles on quotient stacks}
Let $\lambda \in X^{*}(L)$ be a character of $L$ and let $\mu, \nu \in X^{*}(T)$ be characters of $T$. Via the first projection $E\to P$ followed by the natural projection $P\to L$ we view $\lambda$ as an element of $X^{*}(E)$. Similarly via the first projection $E'\to B_k$ followed by the natural projection $B_k\to T_k$ we may view $\mu$ as an element of $X^{*}(E')$. The projections $B_k\to T_k$ allow us to view $\mu$ and $\nu$ as elements of $X^{*}(B)$, so $(\mu,\nu)$ is a character of $B\times B$. From the associated sheaves construction, we may therefore form the sheaves
\begin{equation}
    \begin{aligned}
    \mathcal{L}_{\gzip}(\lambda) &\text{ on } \text{$G$-$\mathtt{Zip}^{\mu}$} \\
    \mathcal{L}_{\gzipf}(\mu) &\text{ on } \text{$G$-$\mathtt{ZipFlag}^{\mu}$} \\
    \mathcal{L}_{\sbt}(\mu, \nu) &\text{ on } \sbt.
    \end{aligned}
\end{equation}
From the picture (\ref{three.stacks}) we can pull back via $\pi$ and $\psi$, and doing so the line bundles on the three stacks are related as follows.
\begin{lemma}\label{schubert.gzipflag.rel}
Let $\sigma \colon k \to k$ denote the inverse of the map $x\mapsto x^{p}$, and let ${}^{\sigma}(-)$ denote the Galois action on the characters of $T$. Then the following hold.
\begin{enumerate}
    \item For all $\lambda \in X^{*}(L)$, $\pi^{*}\mathcal{L}_{\gzip}(\lambda) = \mathcal{L}_{\gzipf}(\lambda)$.
    \item For all $\mu, \nu \in X^{*}(T)$, 
    \begin{equation}\label{psi.pullback}
        \psi^{*}\mathcal{L}_{\sbt}(\mu, \nu) = \mathcal{L}_{\gzipf}(\mu + p{}^{\sigma^{-1}}(z\nu)).
    \end{equation}
\end{enumerate}
\end{lemma}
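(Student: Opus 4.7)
The plan is to reduce both claims to explicit character computations by working on the quotient-stack presentations $\gzip^{\mu} \cong [E\backslash G_k]$, $\gzipf^{\mu} \cong [E'\backslash G_k]$ and $\sbt \cong [(B_k \times B_k)\backslash G_k]$. Under the associated sheaves construction, a morphism of quotient stacks $[X/H] \to [X'/H']$ induced by a group homomorphism $\rho\colon H \to H'$ and a $\rho$-equivariant map $X \to X'$ pulls the line bundle associated to $\chi \in X^{*}(H')$ back to the line bundle associated to $\chi \circ \rho \in X^{*}(H)$. Both statements therefore reduce to identifying the character $\chi \circ \rho$.

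For part (1), the projection $\pi$ corresponds, under the chosen identifications, to the morphism $[E'\backslash G_k] \to [E\backslash G_k]$ induced by the inclusion $E' \hookrightarrow E$ and the identity on $G_k$. The character of $E$ defining $\mathcal{L}_{\gzip}(\lambda)$ is $E \to P \to L \xrightarrow{\lambda} \mg_m$. Restricting to $E'$, the first projection $E' \to P$ factors through $B_k$, and the composite $B_k \hookrightarrow P \to L$ coincides with $B_k \to T_k \hookrightarrow L$. Hence the restriction equals $E' \to B_k \to T_k \xrightarrow{\lambda|_T} \mg_m$, which is exactly the character defining $\mathcal{L}_{\gzipf}(\lambda)$ (understood with $\lambda$ restricted to $T$).

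For part (2), first transport $\mathcal{L}_{\sbt}(\mu, \nu)$ across the isomorphism $\sbt \cong [(B_k \times {}^{z}B_k)\backslash G_k]$ induced by $g \mapsto gz$: the character $(\mu, \nu)$ becomes $(\mu, z\nu)$, where $z\nu \in X^{*}(T)$ is viewed on ${}^{z}B_k$ via the natural quotient ${}^{z}B_k \to T_k$. Pulling back along the morphism induced by $E' \hookrightarrow B_k \times {}^{z}B_k$ yields the character of $E'$
\[
(a,b) \mapsto \mu(t(a))\cdot (z\nu)(t(b)),
\]
where $t(a), t(b) \in T_k$ denote the respective torus parts. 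The crucial step is to use the defining relation $\fr(\overline{a}) = \overline{b}$ of $E$ to rewrite $(z\nu)(t(b))$ in terms of $t(a)$. Compatibility of the quotients $P \to L$ and $(P^{+})^{(p)} \to L^{(p)}$ with the inclusion of the common maximal torus $T$ identifies $t(b)$ with the image of $t(a)$ under the relative Frobenius $T \to T^{(p)}$; on characters this is multiplication by $p$, while the Frobenius twist of $L$ into $L^{(p)}$ produces the Galois twist ${}^{\sigma^{-1}}$ when translating back to characters of $T_k$. The resulting character of $E'$ is therefore $\mu + p\cdot {}^{\sigma^{-1}}(z\nu)$, which gives (2).

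The main obstacle is the careful bookkeeping in part (2): fixing the convention for the isomorphism induced by $g \mapsto gz$ (so that $\nu$ is twisted to $z\nu$ rather than to $z^{-1}\nu$), correctly identifying the ``second-coordinate'' character on $E'$ through the quotient ${}^{z}B_k \to T_k$ and the projection $(P^{+})^{(p)} \to L^{(p)}$, and properly tracking the Galois twist ${}^{\sigma^{-1}}$ arising from the Frobenius twist between $L$ and $L^{(p)}$. Once these identifications are pinned down, the asserted equality of characters is a direct computation, and part (1) is essentially tautological by comparison.
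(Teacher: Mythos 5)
The paper does not give a proof of this lemma at all: it cites \cite[Lemma 3.1.1]{wushi.invent}. Your proposal therefore supplies what the paper leaves out, and the method you use --- identifying all three stacks with quotient stacks and computing pullbacks of associated line bundles via restriction of characters along the defining group homomorphisms --- is exactly the standard way this is established in the literature (and is in the spirit of how \cite{wushi.invent} proceeds).

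That said, two points are stated imprecisely. In part (1), you write that ``the composite $B_k \hookrightarrow P \to L$ coincides with $B_k \to T_k \hookrightarrow L$,'' which is false whenever $L$ is not a torus: the image of $B_k$ under $P\to L$ is a Borel $B_L$ of $L$, not $T_k$. The conclusion is still right, but the correct justification is that any $\lambda \in X^*(L)$ vanishes on $R_u(B_L)$, so $\lambda|_{B_L}$ factors through $T_k$; only after composing with $\lambda$ do the two maps $B_k \to L$ and $B_k \to T_k \hookrightarrow L$ agree. (In the Hilbert case $P=B$, $L=T$, so the distinction collapses, but the lemma is stated in general.) In part (2), the key claim that ``$t(b)$ is identified with $\fr(t(a))$ via compatibility of the projections $P\to L$ and $(P^+)^{(p)} \to L^{(p)}$'' rests on the fact that for $(a,b)\in E'$ the element $b$ lies in the Borel $(B_L)^{(p)} R_u\big((P^+)^{(p)}\big)$ of $(P^+)^{(p)}$, and that this Borel is exactly ${}^z B_k$ with $z = w_0 w_{0,I^+}$; you invoke this (it underlies the paper's assertion $E' \subseteq B_k \times {}^z B_k$) but do not check it, and it is precisely where the choice of $z$ enters. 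You are right that the $z$-conjugation convention and the direction of the Galois twist (${}^\sigma$ vs.\ ${}^{\sigma^{-1}}$) must be tracked carefully; your outline is consistent with the stated formula, but a self-contained proof would need to pin down these facts rather than appeal to ``compatibility.''
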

\begin{proof}
See \cite[Lemma 3.1.1]{wushi.invent}.
\end{proof}
%
\subsection{Order of Vanishing on Stacks in the Smooth case}
Let $S$ be a smooth scheme over $k$, $\mathcal{L}$ a line bundle on $S$ and $f$ a global section of $\mathcal{L}$. Recall that $f$ determines an ideal sheaf $\mathcal{I}$ given by the image of the morphism $\mathcal{L}^{-1}\to \mathcal{O}_S$ induced by $f$. The zero scheme of $f$ is defined to be the subscheme determined by $\mathcal{I}$ (cf. \cite{stacks-project}). 
For any $n\in \mathbb{N}$, we say that $f$ vanishes to order $n$ at a closed point $s\in S$ if $\mathcal{I}_s$ is contained in $\mathfrak{m}_s^n$, where $\fkm_s$ is the maximal ideal of $\mathcal{O}_{S,s}$. The order of vanishing of $f$ at $s$ is defined to be the maximal such integer $n$, denoted by $\ord_s(f)$. Next, let $[X/H]$ be any of the stacks described above, let $\chi \colon H\to \mg_m$ be a character of $H$ and let $f \in H^0(\chi)$. 
The order of vanishing of $f$ on $\overline{x}\in [X/H]$ is defined to be the order of vanishing of $f$ on $x \in X$, where $x$ is a lift $\overline{x}$. This is well-defined since $f(xh)=\chi^{-1}(h)f(x)$ for all $x\in X$ and $h \in H$. Furthermore, if $S$ is a smooth scheme and $\zeta \colon S\to [X/H]$ is a smooth morphism of stacks, then $\ord_s(\zeta^{*}f) = \ord_{\zeta(s)}(f)$. Indeed, taking the base change of $\zeta$ along the projection $X\to [X/H]$ we obtain an $H$-torsor $S'\to S$ sitting in the following Cartesian diagram
\begin{equation}
    \begin{tikzcd}
    S' \arrow[d, "\zeta^{\#}"] \arrow[r, "\pi^{\#}"] & S \arrow[d, "\zeta"] \\
    X \arrow[r, "\pi"] & \left[ X/H \right]
    \end{tikzcd}.
\end{equation}
Then we see that, for $t\in S'$ such that $\pi^{\#}(t)=s$ and with $\overline{x} = \zeta(s)$, 
\begin{equation}
    \ord_s(\zeta^{*}f) = \ord_{t}((\pi^{\#})^{*}\zeta^{*}f) = \ord_t((\zeta^{\#})^{*}\pi^{*}f) = \ord_x(\pi^{*}f) = \ord_{\overline{x}}(f).
\end{equation} 
%
%
%
%
\section{Shimura varieties mod $p$} \label{section.shimura.var}
\subsection{Shimura varieties of Hodge type}
Let $(\mathbf{G},X)$ be a Shimura datum of Hodge type with reflex field $E$. Recall that this means that there is an embedding of Shimura data, $\varphi \colon (\mathbf{G}, X)\hookrightarrow (GSp(V,\psi), X_g)$, where $V$ is a $2g$-dimensional $\mathbb{Q}$-vector space, $\psi$ is a symplectic pairing on $V$, and $X_g$ is the Siegel double half-space. 

Let $p$ be a prime over which $\mathbf{G}$ is unramified, and let $\mathfrak{p}\subseteq \calo_E$ denote a prime over $p$. Let $K$ be a sufficiently small open compact subgroup of $\mathbf{G}(\mathbb{A}_{f})$ such that $K = K^{p}K_{p}$, where $K^{p}\subseteq \mathbf{G}(\mathbb{A}_{f}^{p})$ and $K_p$ is a hyperspecial subgroup of $G(\mathbb{Q}_p)$. Here $\mathbb{A}_f$ denotes the finite adeles and $\mathbb{A}_f^{p}$ is the subset with trivial $p$-adic component. Let $\text{Sh}_K(\mathbf{G},X)$ denote the corresponding Shimura variety over $E$, sometimes referred to as the Shimura varity of $(\mathbf{G},X)$ of level $K$. Kisin and Vasiu (see \cite{kisin} respectively \cite{vasiu}) have (independently from one another) shown that there is a canonical model, $\mathcal{S}_K(\mathbf{G},X)$, defined over $\mathcal{O}_{E,\mathfrak{p}}$. Let $S_K = \mathcal{S}_K(\mathbf{G},X)\otimes_{\mathcal{O}_{E,\mathfrak{p}}} \overline{\kappa(\mathfrak{p})}$ denote the geometric special fiber of $\mathcal{S}_K(\mathbf{G},X)$ at $\mathfrak{p}$. If $\mathcal{G}$ denotes a reductive model of $\mathbf{G}_{\mathbb{Q}_p}$ over $\mz_p$, then let $G \coloneqq \mathcal{G} \otimes_{\mz_p} \mathbb{F}_p$. 
%

If $h_0 \in X$ is a conjugacy class of morphisms $\mathbb{S}\to \mathbf{G}_{\mr}$, and $\mu_0 \colon \mg_{m,\mc}\to \mathbb{S}_{\mc}$ is the cocharacter of $\mathbb{S}$ defined by $\mu_0(z) = (z,1)$, then the class of $\mu \coloneqq h_0\circ \mu_0$ is a conjugacy class of cocharacters of $\mathbf{G}$  
and furthermore there is a representative defined over $\mathcal{O}_{E_{\mathfrak{p}}}$. Taking the special fibre of the representative we obtain a cocharacter of $G$, which we still denote by $\mu$. 

If we make choices compatible with the embedding $\varphi$ (e.g., such that the neat open compact $K_g \subseteq GSp(V,\psi)(\mathbb{A}_f)$ contains $\varphi(K)$, and that $h_g = \varphi \circ h$, etc.) then the embedding of Shimura data gives an embedding of schemes, $\varphi_K \colon \text{Sh}_K(\mathbf{G},X) \to \text{Sh}_{K_g}(GSp(V,\psi), X_g)$. This embedding lifts to the models, still denoted by $\varphi_K \colon \mathcal{S}_{K}(\mathbf{G},X)\hookrightarrow \mathcal{S}_{K_g}(GSp(V,\psi),X_g)$. If we let $\mathcal{A}_{K_g}\to \mathcal{S}_{K_g}(GSp(V,\psi),X_g)$ denote the universal abelian variety over the (model of the) Siegel modular variety, then pulling back along $\varphi_K$ and taking 
the 
geometric 
special fiber, we obtain a universal abelian variety
\begin{equation*}
    \pi_K \colon A_K \to S_K.
\end{equation*}
Zhang has shown in \cite{zhang} that there is a smooth morphism $\zeta_K \colon S_K \to \gzip^{\mu}$ whose fibers are the Ekedahl-Oort strata of $S_K$. 
Combining with (\ref{three.stacks}) 
we arrive at the following diagram
\begin{equation}\label{shimura.to.gzip}
    \begin{tikzcd}
    S_K \arrow[d, "\zeta_K"] & \\
    G \zip^{\mu} & G\zipf^{\mu} \arrow[l, "\pi"] \arrow[d, "\psi"] \\
     & \sbt
    \end{tikzcd}.
\end{equation}

\begin{remark}
In fact, the morphism $\zeta_K$ is defined and smooth over a finite extension of $\mathbb{F}_p$. See also Remark \ref{g-zip.field.of.def}.
\end{remark}
\subsubsection{The Hasse invariant}
Keep the notations of the previous section, but omit the subscript $K$. The universal abelian variety over $S$ gives the Frobenius diagram
\begin{equation}
    \begin{tikzcd}
    A \arrow[r, "F"] \arrow[rr, bend left = 30, "F_{\text{abs}}"] \arrow[rd, "\pi"] & A^{(p)} \arrow[r] \arrow[d, "\pi^{(p)}"] & A \arrow[d, "\pi"] \\
     & S \arrow[r, "F_{\text{abs}}"] & S
    \end{tikzcd},
\end{equation}
where $F_{\text{abs}}$ denotes the absolute Frobenius and
where the square is Cartesian. There are (see \cite[Section 7]{katz}) two spectral sequences converging to the de Rham cohomology of $\pi \colon A\to S$, called respectively the Hodge and conjugate spectral sequences;
\begin{equation}
    \begin{aligned}
        E_{1}^{a,b} = R^{b}\pi_{*}\Omega_{A/S}^{a} &\implies H_{\dr}^{a+b}(A/S) \\
        {}_{\Conj}E_{2}^{a,b} = R^{a}\pi_{*}^{(p)} \mathcal{H}^{b}(F_{*}\Omega_{A/S}^{\bullet}) &\implies H_{\dr}^{a+b}(A/S).
    \end{aligned}
\end{equation}
There is also for each $i$ a Cartier isomorphism
\begin{equation}
    C^{-1} : \Omega_{A^{(p)}/S}^{i} \xrightarrow{\cong} \mathcal{H}^{i}(F_{*}\Omega_{A/S}^{\bullet}),
\end{equation}
from which we can rewrite the second spectral sequence as
\begin{equation}
    {}_{\text{conj}}E_{2}^{a,b} = R^{a}\pi_{*}^{(p)}\Omega_{A^{(p)}/S}^{b} \implies H_{\dr}^{a+b}(A/S).
\end{equation}
Since $S$ is smooth $F_{\text{abs}}$ is flat and hence (see \cite[III. Proposition 9.3]{hartshorne}) we have that
\begin{equation}
    R^{a}\pi_{*}^{(p)}\Omega_{A^{(p)}/S}^{b} \cong F_{\text{abs}}^{*}R^{a}\pi_{*}\Omega_{A/S}^{b}
\end{equation}
In other words, we obtain that
\begin{equation}
     {}_{\text{conj}}E_{2}^{b,a} \cong F_{\text{abs}}^{*}E_{1}^{a,b}.
\end{equation}
Since $A\to S$ is an abelian variety, the sheaves $R^b\pi_* \Omega_{A/S}^a$ are locally free and the Hodge spectral sequence degenerates at $E_1$. It follows that the conjugate spectral sequence degenerates at $E_2$, that the formation of the two spectral sequences commutes with base change $S' \to S$ and that the corresponding filtrations consists of local direct factors (see e.g. \cite[Section 6]{moonen.wedhorn}).

Now let $n$ denote the dimension of $A/S$, let $\fil^{\bullet}$ be the (decreasing) Hodge filtration of $H_{\dr}^{n}(A/S)$ corresponding to the Hodge spectral sequence, and let ${}_{\Conj}\fil_{\bullet}$ denote the (increasing) conjugate filtration of $H_{\dr}^{n}(A/S)$ corresponding to the conjugate spectral sequence. With $\omega \coloneqq \pi_{*}\Omega_{A/S}^{n}$, we have that $\fil^0/\fil^1 \cong E^{0,n} \cong R^{n}\pi_{*}\mathcal{O}_A \cong \omega^{\vee}$ and ${}_{\Conj}\fil_{0} \cong {}_{\Conj}E^{0,n} \cong \pi_{*}^{(p)}\Omega_{A^{(p)}/S}^{n}$. The {\bf Hasse invariant} is defined to be the map
\begin{equation}
    h \colon F_{\text{abs}}^{*}\omega^{\vee} = F_{\text{abs}}^{*}(\fil^{0}/\fil^{1}) \xrightarrow{\cong}  {}_{\Conj}\fil_{0} \hookrightarrow H_{\dr}^{n}(A/S) \twoheadrightarrow \fil^0/\fil^1 = \omega^{\vee}.
\end{equation}
Upon tensoring with $\omega$ the Hasse invariant can be viewed as a map $\omega^{1-p}\to \mathcal{O}_S$, i.e. as a section of $\omega^{p-1}$. 
\subsubsection{The Hodge character}
We describe the Hodge character in a certain set of coordinates. To this end, let $J$ denote the $n\times n$ matrix with anti-diagonal entries equal to $1$, and 0 elsewhere, and let the coordinates of $GSp(V,\psi)$ be given by 
$\begin{bmatrix} 0 & J \\ -J & 0 \end{bmatrix}$. Let $T_d$ denote the diagonal torus. Under the identification $X^{*}(T_d)\cong \{ (a_1,...,a_n ; c)\in \mathbb{Z}^{n+1} : \sum_{i=1}^{n}a_i \equiv c \pmod{2} \}$, $\Big(\text{diag}(t_1z,...,t_nz,t_{n}^{-1}z ,..., t_{1}^{-1}z)\mapsto t_{1}^{a_1}\cdots t_{n}^{a_n}z^{c} \Big) \leftrightarrow (a_1,...,a_n; c)$, by \cite[Section 4.11]{wushi.invent} the {\bf Hodge character} of $GSp(V,\psi)$ in these coordinates is the character
\begin{equation}
    \eta_n = (-1,...,-1; -n).
\end{equation}
The Hodge character of $\mathbf{G}$ associated to $\varphi$ is $\eta \coloneqq \varphi^{*}\eta_n$. The Hodge character also extends to the special fiber, and it is known (see e.g. ibid, cf \cite{goldring.koskivirta.quasi-constant}) that 
\begin{equation}
    \omega = \zeta^{*}\mathcal{L}_{\gzip}(\eta)
\end{equation}
and that there exists a global section $h_{\gzip}$ of $\mathcal{L}_{\gzip}((p-1)\eta)$ such that
\begin{equation}
    h = \zeta^{*}h_{\gzip}.
\end{equation}

For a coordinate-free description of the Hodge character see \cite[Section 4.1.9-11]{wushi.invent}.
\subsection{Hilbert modular varieties}\label{hilbert}
Now let $F$ be a totally real field extension of $\mq$ of degree $n$. From now on let $\mathbf{G}'\coloneqq \res_{F/\mq}(GL_{2,F})$ and let $\mathbf{G} \coloneqq \{ g \in \mathbf{G}' : \det(g)\in \mq^{\times} \}$. On $\mr$-points this can also be described as
\begin{equation}
    \mathbf{G}(\mr) \cong \{ (g_i) \in \prod_{i=1}^{n} GL_{2}(\mr) : \det(g_i) = \det(g_j), \, \forall i,j \}.
\end{equation}
Let $h_0 \colon \mathbb{S}\to \mathbf{G}_{\mr}$ be the map defined on $\mr$-points by
\begin{equation}
\begin{aligned}
    h_0(\mr) \colon \mathbb{S}(\mr)=\mc^{\times} &\to \mathbf{G}(\mr) \\
    x+iy &\mapsto \Big( \begin{bmatrix} x & y \\ -y & x \end{bmatrix}, ..., \begin{bmatrix} x & y \\ -y & x \end{bmatrix} \Big)
\end{aligned}
\end{equation}
and let $X$ denote the $\mathbf{G}(\mr)$-conjugacy class of $h_0$ inside $\Hom(\mathbb{S},\mathbf{G}_{\mr})$. We obtain thus the Hilbert Shimura datum $(\mathbf{G},X)$. The cocharacter $\mu$ is described by $\mu(z) =h_0(\mc)(z, 1)= \Big( \text{diag}(z,1), ..., \text{diag}(z,1) \Big)$, 
and the reflex field is $\mq$. The associated Shimura varieties are called the {\bf Hilbert modular varieties}.

Let $(G,\mu)$ denote the corresponding cocharacter datum. We see that the parabolic $P$ is simply the Borel $B$ whose $k$-points are the (product of the) lower triangular matrices. 
We have an embedding $G\hookrightarrow GSp_{2n}$ given over $k$ by
\begin{equation}\label{hodge.character}
\begin{aligned}
    \varphi \colon G_k &\hookrightarrow GSp_{2n,k} \\
    \Big(\begin{bmatrix} a_1 & b_1 \\ c_1 & d_1 \end{bmatrix}, ..., \begin{bmatrix} a_n & b_n \\ c_n & d_n \end{bmatrix} \Big) &\mapsto \begin{bmatrix} \text{diag}(a_1,...,a_n) & \text{diag}(b_1,...,b_n)J \\
    \text{diag}(c_n,...,c_1)J & \text{diag}(d_n,...,d_1)
    \end{bmatrix}
\end{aligned}
\end{equation}
where the right hand side is written in block form, each block matrix being of size $n\times n$.

Let also $T\subseteq B \subseteq G$ denote the maximal torus whose $k$-points are the products of the diagonal tori.
%
%
%
%
%
%
\section{Proof of Theorem \ref{main.theorem}} \label{section.proof.of.theorem}
In this section we prove the main theorem of the text. We begin by describing the order of vanishing of the Hasse invariant and then show that the order of vanishing corresponds to the relative position of the Hodge and conjugate filtrations as stated.

From here onward let $S$ denote the geometric special fiber of the Hilbert variety of level $K$ as described above, omitting the subscript. Let $\pi \colon A\to S$ denote the universal abelian variety over $S$. Further, let $(G,\mu)$ and the associated groups $B,T$ be as defined above (section \ref{hilbert}).
\subsection{Strategy outline}
The strategy is quite simple; we find a section of a line bundle on $\sbt$ which on the one hand pulls back via $\psi$ and $\zeta$ to the Hasse invariant, and on the other hand has a very concrete description as a function on $G_k/B_k$, from where we immediately read off its order of vanishing at different points.
\subsection{Order of vanishing of the Hasse invariant}
Let $G'' = \prod_{i=1}^{n}GL_{2,k}$, $G' = \prod_{i=1}^{n}SL_{2,k}$, let $B_l$ be the lower triangular matrices in $GL_{2,k}$ and let $B'' = \prod_{i=1}^{n} B_l$, $B' = G'\cap B''$. Let further $T_d$ denote the diagonal matrices in $GL_{2,k}$ and $T'' = \prod_{i=1}^{n}T_d$ and $T' = G' \cap T''$. We see that $B_k = G_k\cap G''$ and $T_k = G_k\cap T''$. 

If $\omega''$ denotes the character of $GL_{2,k}$ defined by
\begin{equation}
    \omega'' : \begin{bmatrix} s_1 & 0 \\ 0 & s_2 \end{bmatrix} \mapsto s_{1}^{-1}
\end{equation}
then let, for each $i=1,...,n$, $\eta_{i}''$ denote the character of $G''$ which acts as $\omega''$ on the $i^{\text{th}}$ component and trivially on the others and let $\eta_{i}'$ and $\eta_i$ denote the restrictions of $\eta_{i}''$ to $G'$ and $G$, respectively. With $\eta'' \coloneqq \sum_{i=1}^{n} \eta_{i}''$, we see from (\ref{hodge.character}) that the {\bf Hodge character} of $G$ is $\eta \coloneqq \eta''|_{T}$. On points it is defined as
\begin{equation}
    \eta \colon \Bigg( \begin{bmatrix} t_i z & 0 \\ 0 & t_{i}^{-1}z \end{bmatrix} \Bigg)_i \mapsto \prod_{i} t_{i}^{-1}z^{-1}.
\end{equation}
Let similarly $\eta'\coloneqq \eta|_{T'}$.

We have that $W\coloneqq W(G',T')\cong W(G,T)\cong W(G'',T'') \cong \prod_{i=1}^{n}S_2$, where $S_2 = \{ \pm 1 \}$ and $w_0 = (-1,-1,...,-1)$. 

\begin{lemma}\label{H0.sbt}
Keep the notation as above. We have that 
\begin{equation}
    H^{0}(w_0\eta)_{\eta} = H^{0}(\text{Sbt}, \call_{\sbt}(-\eta, w_0\eta)).
\end{equation} 
\end{lemma}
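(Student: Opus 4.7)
The plan is to unpack both sides as spaces of regular functions on $G_k$ with certain semi-invariance conditions, note that one inclusion is immediate, and then exhibit an explicit common generator and compute dimensions to conclude.

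By the associated sheaves construction, an element of $H^0(\sbt, \mathcal{L}_\sbt(-\eta, w_0 \eta))$ is a regular function $f \in \mathcal{O}(G_k)$ satisfying $f(agb^{-1}) = \eta^{-1}(a) \cdot (w_0 \eta)(b) \cdot f(g)$ for all $(a, b) \in B_k \times B_k$, where characters of $T_k$ are extended to $B_k$ via the projection $B_k \to T_k$. Setting $a = 1$ and $b = 1$ separately decomposes this into right $B_k$-semi-invariance with $(w_0 \eta)^{-1}$ together with left $B_k$-semi-invariance with $\eta^{-1}$. An element of $H^0(w_0 \eta)_\eta$ satisfies only the same right-semi-invariance together with the weaker left $T_k$-semi-invariance $f(tg) = \eta^{-1}(t) f(g)$. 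Hence the inclusion $\text{RHS} \subseteq \text{LHS}$ is tautological.

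For the reverse inclusion, I would produce an explicit generator. Define
\begin{equation*}
f_0(g) := \prod_{i=1}^n b_i(g),
\end{equation*}
where $b_i(g)$ is the $(1,2)$-entry of the $i$-th $GL_{2,k}$-component of $g$ under the embedding $(\ref{hodge.character})$. A direct matrix computation shows that $f_0$ is simultaneously left $B_k$-semi-invariant with $\eta^{-1}$ and right $B_k$-semi-invariant with $(w_0 \eta)^{-1}$, so $f_0 \in \text{RHS}$. To conclude the equality, I would then show that $\dim H^0(w_0 \eta)_\eta = 1$: since, up to center, $G$ factors as $\prod_{i=1}^n SL_{2,k}$, the representation $H^0(w_0 \eta)$ decomposes as a tensor product of the two-dimensional standard $SL_{2,k}$-representations, each having one-dimensional weight spaces at the weights $\chi_{-1}$ and $\chi_{1}$, so the $\eta$-weight space of the tensor product is one-dimensional. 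Together with $f_0 \neq 0$ in both sides, this forces $\text{LHS} = \text{RHS}$.

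The main technical obstacle is tracking the common-determinant condition distinguishing $G$ from $\prod_{i=1}^n GL_{2,k}$ when performing the tensor-product decomposition. This should be manageable because $\eta$ is the restriction to $G$ of the character $\eta''$ on $\prod_{i=1}^n GL_{2,k}$, and the relevant weight space structure on the full product descends cleanly to $G$.
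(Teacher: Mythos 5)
Your proof is correct but takes a genuinely different route from the paper's. The paper establishes the nontrivial inclusion directly: for $f \in H^0(w_0\eta)_\eta$, writing $a = ut \in B_k = U_k T_k$, one has $f(ag) = f(tg) = \eta(a)^{-1} f(g)$ because the $\eta$-weight space is the \emph{lowest} weight space of $H^0(w_0\eta)$ and is therefore fixed pointwise by the unipotent radical $U_k$ of $B_k$. This upgrades the left $T_k$-semi-invariance to left $B_k$-semi-invariance at one stroke, with no dimension count, and as the remark following the lemma notes it generalizes at no cost to $H^0(\sbt, \call_{\sbt}(\lambda, -w_0\lambda)) = H^0(-w_0\lambda)_{-\lambda}$ for every dominant $\lambda$. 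You instead observe that the inclusion of the right-hand side into the left-hand side is tautological, produce the explicit generator $f_0 = \prod_i b_i$ of the right-hand side, and close the gap by computing $\dim H^0(w_0\eta)_\eta = 1$ via the $\prod_i SL_{2,k}$ tensor decomposition. This buys you explicit coordinates for the Schubert section for free -- the paper re-derives essentially the same formula $h_{\sbt} = \prod_i x_{i0}$ inside the proof of Theorem~\ref{hasse.vanish} -- at the cost of being less portable to other characters and requiring the passage between $G$, $G' = \prod SL_{2,k}$ and $G'' = \prod GL_{2,k}$ that you flag as a technical obstacle. That passage is precisely the content of Lemma~\ref{res.to.sl}, which supplies the isomorphism $\res_{G'}^{G_k} H^0(w_0\eta) \cong H^0(w_0\eta')$ you would need to make the dimension count on $G$ (rather than on $G'$ or $G''$) rigorous.
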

\begin{proof}
An element $f \in H^{0}(\sbt, \call_{\sbt}(-\eta, w_0\eta))$ is a function $f \colon G_k \to \mathbb{A}^1$ such that for all $(a,b) \in B_k\times B_k$ we have that
\begin{equation}
f(agb) = f((a,b^{-1})\cdot g) = (-\eta,w_0\eta)(a,b^{-1})f(g) = \eta(a)^{-1}w_0\eta(b)^{-1}f(g).
\end{equation}
An element $f\in H^{0}(w_0\eta)_{\eta}$ is a function $f : G_k \to \mathbb{A}^1$ such that for all $(a,b)\in B_k\times B_k$ we have that
\begin{equation}
\begin{aligned}
    f(agb) &= (w_0\eta)(b)^{-1}f(ag) && \text{(because $f \in H^0(w_0\eta)$)} \\
    &=\Big((w_0\eta )(b)^{-1}\Big)(a^{-1}\cdot f)(g) && \text{(def. of the action of $G$ on $H^0(w_0\eta)$)} \\
    &= (w_0\eta)(b)^{-1}\eta(a)^{-1}f(g) && \text{(because $f \in H^{0}(w_0\eta)_{\eta}$)}.
\end{aligned}
\end{equation}
To elaborate on the last equation; note that $H^0(w_0\eta)_\eta$ is the lowest weight space of $H^0(w_0\eta)$, hence consists of vectors invariant under the unipotent radical, $U$, of $B$. Since $\eta(a)=\eta(t)$, for $a=ut\in B_k=U_k T_k$, the equality follows. 
\end{proof}
\begin{remark}
More generally we see that
\begin{equation}
    H^0(\text{Sbt}, \call_{\sbt}(\lambda, -w_0\lambda)) = H^0(-w_0\lambda)_{-\lambda},
\end{equation}
for every dominant character $\lambda$.
\end{remark}
\begin{lemma}\label{res.to.sl}
The restriction map $\calo(G)\to \calo(G')$ induces an isomorphism
\begin{equation}
    \res_{G'}^{G_k} H^{0}(w_0\eta) \cong H^{0}(w_0\eta').
\end{equation}
\end{lemma}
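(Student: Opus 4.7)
The plan is to realize $H^{0}(w_0\eta)$ and $H^{0}(w_0\eta')$ as global sections of line bundles on the flag varieties $G_k/B_k$ and $G'/B'$ respectively, and to show that the natural map $G'/B' \to G_k/B_k$ induced by the inclusion $G' \hookrightarrow G_k$ is an isomorphism of varieties under which the two line bundles correspond. Under this identification, the pullback of sections translates to the restriction of functions $\calo(G_k) \to \calo(G')$, yielding the desired isomorphism.

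The key input is the pair of identities $G_k = G' \cdot B_k$ and $G' \cap B_k = B'$. The intersection identity is immediate from the definitions of $B_k$ and $B'$ as lower triangular matrices, with and without a determinant condition. For the product identity, given $g = (g_i)_i \in G_k$ with common determinant $z$, one can write $g = g' \cdot d(z)$, where $d(z) := (\diag(1,z))_i$ lies in $B_k$ and $g' := g \cdot d(z)^{-1}$ has determinant $1$ in each factor, hence lies in $G'$. Together these identities imply that the induced map $G'/B' \to G_k/B_k$ is an isomorphism of flag varieties.

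The compatibility of line bundles is then formal: the line bundle $\call(w_0\eta)$ on $G_k/B_k$ is associated to the character $w_0\eta \colon B_k \to \mg_m$, and restricting along $B' \hookrightarrow B_k$ gives precisely $w_0\eta|_{B'} = w_0\eta'$ by definition of $\eta'$ (using also the natural identification $W(G',T') \cong W(G_k,T_k)$ of Weyl groups). Hence $\call(w_0\eta)$ pulls back to $\call(w_0\eta')$, and the identification of global sections with functions on $G_k$ (resp.\ $G'$) satisfying the transformation rule turns the pullback of sections into exactly the restriction map in the statement.

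The only non-formal point is the scheme-theoretic verification that $G'/B' \to G_k/B_k$ is an isomorphism, as opposed to merely a bijection on $k$-points. This can be handled either by observing that the multiplication morphism $G' \times B_k \to G_k$, $(g',b) \mapsto g'b$, is a $B'$-torsor for the action $b'\cdot (g',b) = (g'b', b'^{-1}b)$ and descending, or by noting that both flag varieties are smooth, proper, and irreducible of the same dimension and invoking a standard argument to promote the bijective morphism to an isomorphism.
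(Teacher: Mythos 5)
Your proof is correct and takes a genuinely different route from the paper's. The paper's argument is representation-theoretic: it first uses Jantzen's tensor identity to reduce to the case $n=1$ and then invokes \cite[Section II.2.16]{jantzen} for the $GL_2$ versus $SL_2$ comparison, obtaining surjectivity of the restriction map; injectivity is then proved separately by hand, using the decomposition $G_k = G'\cdot \mg_{m,k}$ with $\mg_{m,k}\subseteq B_k$. Your argument instead establishes the geometric isomorphism of flag varieties $G'/B' \xrightarrow{\sim} G_k/B_k$ via the identities $G_k = G'\cdot B_k$ and $G'\cap B_k = B'$, then observes that $\call(w_0\eta)$ pulls back to $\call(w_0\eta')$ since $w_0\eta|_{B'} = w_0\eta'$; this delivers injectivity and surjectivity simultaneously and is more self-contained (no appeal to Jantzen). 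Your product decomposition $g = g'\cdot d(z)$ with $d(z) = (\diag(1,z))_i$ is essentially the same device the paper uses for injectivity, just exploited more fully. One caution on the scheme-theoretic step: the phrase ``invoking a standard argument to promote the bijective morphism to an isomorphism'' is not quite enough in characteristic $p$, where a bijective morphism of smooth proper irreducible varieties of equal dimension need not be an isomorphism (Frobenius). You either need your first alternative (checking that $G'\times B_k\to G_k$ is fppf, e.g.\ by miracle flatness since $\dim G' + \dim B_k = \dim G_k + \dim B'$, and descending the $B'$-torsor), or you need to supplement the bijectivity argument by a separability check, e.g.\ by noting the differential $\mathfrak{g}'/\mathfrak{b}'\to\mathfrak{g}_k/\mathfrak{b}_k$ is an isomorphism, so the morphism is étale and hence an isomorphism. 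Either fix is routine, so this is a minor omission rather than a genuine gap.
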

\begin{proof}
By abuse of notation, let $w_0\eta_i$ also denote the character of $GL_{2,k}$ obtained from embedding $T_d$ into the $i^{\text{th}}$ component in $G''$ (and similarly for $w_0\eta_i'$). Then by \cite[Lemma I.3(5)]{jantzen} we see that
\begin{equation}\label{ind.tensor}
\begin{aligned}
    \ind_{B''}^{G''}(w_0\eta'')&\cong \bigotimes_{i=1}^{n} \ind_{B_l}^{GL_{2,k}}(w_0\eta_i'') \\
    \ind_{B'}^{G'}(w_0\eta') & \cong \bigotimes_{i=1}^{n} \ind_{SL_{2,k}\cap B_l}^{SL_{2,k}}(w_0\eta_i').
\end{aligned}
\end{equation}
The character $w_0\omega''$ is the character $\overline{\omega}$ in \cite[Section II.2.16]{jantzen} and the argument given there applies also to $SL_{2,k}$, which shows that $\res_{SL_{2,k}}^{GL_{2,k}}H^{0}(w_0\eta_i'') \cong H^{0}(w_0\eta_i')$, whence $H^{0}(w_0\eta') \cong \res_{G'}^{G''}H^{0}(w_0\eta'')$, by (\ref{ind.tensor}) above. We thus have the following diagram
\begin{equation}
    \begin{tikzcd}
    \res_{G'}^{G''}H^{0}(w_0\eta'') \arrow[r] \arrow[rr, bend left = 20, "\cong"] & \res_{G'}^{G_k}H^0(w_0\eta) \arrow[r] & H^0(w_0\eta'),
    \end{tikzcd}
\end{equation}
where both horizontal arrows are the (restrictions of the) restrictions $\calo(G'')\to \calo(G_k)$ and $\calo(G_k)\to \calo(G')$, respectively. Hence, the restriction map from $H^{0}(w_0\eta)$ to $H^{0}(w_0\eta')$ is surjective, and we have only to prove injectivity. 

To this end, note that under the identification $\mg_{m,k} = \{ (g_i) \in G_k : \forall i=1,...,n, \, g_i = \begin{bmatrix} a & 0 \\ 0 & 1 \end{bmatrix}, \, a \in k^{\times} \}$ the map 
\begin{equation}
    G'\times \mg_{m,k} \to G_k, \,\,\, (g', a)\mapsto g'a
\end{equation}
is an isomorphism. Now let $f_1, f_2 \in H^0(w_0\eta)$ be such that $f_1|_{G'} = f_2|_{G'}$. For any $g\in G_k$, write $g = g'a$ for some $g'\in G'$ and $a\in \mg_{m,k}$. Then, noting that $\mg_{m,k}\subseteq B_k$, we have that
\begin{equation}
    \begin{aligned}
    f_1(g) &= f_1(g'a) \\
    &=w_0\eta(a)^{-1}f_1(g') && \text{($f_1 \in H^0(w_0\eta)$)} \\
    &=w_0\eta(a)^{-1}f_1|_{G'}(g') \\
    &=w_0\eta(a)^{-1}f_2|_{G'}(g') && \text{(since $f_1|_{G'}=f_2|_{G'}$)} \\
    &=w_0\eta(a)^{-1}f_2(g') \\
    &=f_2(g'a) && \text{($f_2 \in H^0(w_0\eta)$)} \\
    &= f_2(g).
    \end{aligned}
\end{equation}
Hence, we see that $\res_{G'}^{G_k}H^0(w_0\eta) \cong H^0(w_0\eta')$. 
\end{proof}
\begin{theorem}\label{hasse.vanish}
Keep the notation as above. The Hasse invariant vanishes to exactly order $m$ on points in the codimension $m$ strata in the Ekedahl-Oort stratification.
\end{theorem}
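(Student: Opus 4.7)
The plan is to exhibit an explicit section $s$ of a line bundle on $\sbt$ whose pullback via $\psi$ agrees (up to a nonzero scalar) with the pullback via $\pi$ of the Hasse invariant $h_{\gzip}$, and then to read off the order of vanishing of $s$ directly from its description as a polynomial function on $G_k$. This sidesteps any local analysis on $S$ itself.

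First, I would verify that the correct line bundle on $\sbt$ is $\mathcal{L}_{\sbt}(-\eta, w_0 \eta)$. In the Hilbert case one has $P = B$, so the type $I^+$ of $(P^+)^{(p)}$ is empty, giving $w_{0,I^+} = e$ and $z = w_0$. Applying Lemma \ref{schubert.gzipflag.rel} with $\mu = -\eta$ and $\nu = w_0 \eta$, and using that $\eta$ is Galois-invariant (the reflex field being $\mq$), yields $\psi^* \mathcal{L}_{\sbt}(-\eta, w_0 \eta) = \mathcal{L}_{\gzipf}(-\eta + p \cdot {}^{\sigma^{-1}}(\eta)) = \mathcal{L}_{\gzipf}((p-1)\eta) = \pi^*\mathcal{L}_{\gzip}((p-1)\eta)$. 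By Lemma \ref{H0.sbt} combined with Lemma \ref{res.to.sl}, global sections of $\mathcal{L}_{\sbt}(-\eta, w_0 \eta)$ identify with the lowest weight space of $H^0(w_0 \eta') = \bigotimes_{i=1}^n H^0(\omega_i)$, which is one-dimensional and spanned by $s(g_1, \ldots, g_n) := \prod_{i=1}^n (g_i)_{12}$, the product of $(1,2)$-entries across the $\prod SL_{2,k}$ factors. Since the Hasse invariant $\pi^* h_{\gzip}$ lives in the same one-dimensional space of sections, we obtain $\psi^* s = c \cdot \pi^* h_{\gzip}$ for some $c \in k^\times$.

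Next, I would compute the order of vanishing of $s$ on $G_k$ directly. The codimension-$m$ Bruhat stratum of $\sbt$ corresponds to a Weyl element $w_I$ indexed by a subset $I \subseteq \{1, \ldots, n\}$ with $|I| = n - m$: in the $i$-th factor, $g_i$ sits in the big cell $B_l s_{\alpha_i} B_l$ when $i \in I$, making $(g_i)_{12}$ a local unit, and in the closed cell $B_l$ when $i \notin I$, making $(g_i)_{12}$ a local parameter transverse to $B_l$ inside $SL_{2,k}$. The $m$ parameters $\{(g_i)_{12}\}_{i \notin I}$ are independent, so the factorization $s = \bigl(\prod_{i \in I}(g_i)_{12}\bigr)\bigl(\prod_{i \notin I}(g_i)_{12}\bigr)$ exhibits $s$ as a unit times a product of $m$ independent transverse parameters, hence $s$ vanishes to order exactly $m$ at every point of the codimension-$m$ stratum. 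Since $\psi$ and $\zeta$ are both smooth, pullback preserves the order of vanishing (Section \ref{section.ass.sheaves}), so the same holds for the Hasse invariant $h = \zeta^* h_{\gzip}$ on $S$.

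The main obstacle is the matching $\psi^* s = c \cdot \pi^* h_{\gzip}$ in the second paragraph: one must know that the space of sections of the relevant line bundle on $\gzipf^\mu$ in which both live is one-dimensional. This can be justified either by combining Lemmas \ref{H0.sbt}, \ref{res.to.sl} with the surjectivity/flatness of $\psi$, or by noting that both sections vanish precisely on the non-ordinary locus and then applying a rigidity argument on the open ordinary stratum. A secondary technical point is the Galois-invariance of $\eta$ used in the first step; this follows from the fact that $\eta$ is the sum of the weights $\eta_i$ permuted by the Galois action on the embeddings $F \hookrightarrow \overline{\mq}$, and is hence invariant.
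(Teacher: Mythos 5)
Your proposal is correct and follows essentially the same approach as the paper's proof: pull the Hasse invariant back from a section of $\mathcal{L}_{\sbt}(-\eta, w_0\eta)$ via Lemma \ref{schubert.gzipflag.rel}, identify that section explicitly as $\prod_i (g_i)_{12}$ (equivalently $\prod_i x_{i0}$ in the paper's projective coordinates on $\prod_i \mathbb{P}^1_k$), read off its order of vanishing on each Bruhat stratum, and transport the answer back to $S$ via smoothness of $\psi$ and $\zeta$. The only cosmetic difference is that you compute directly on $G_k$ using local parameters rather than passing to $G_k/B_k \cong \prod_i \mathbb{P}^1_k$, which gives the same result.
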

\begin{proof}
Since $P=B$ we see that $\gzip^{\mu}=\gzipf^{\mu}$ so diagram (\ref{shimura.to.gzip}) simplifies to the following sequence of smooth morphisms
\begin{equation}
S \xrightarrow{\zeta} \gzip^{\mu} \xrightarrow{\psi} \sbt,
\end{equation}
and we identify the sheaves $\call_{\gzip}(\lambda)$ and $\call_{\gzipf}(\lambda)$, for any character $\lambda \in X^{*}(T)$. 
We know from Lemma \ref{schubert.gzipflag.rel}.2. that
\begin{equation}
    \psi^{*}\mathcal{L}_{\sbt}(-\eta, w_0\eta) = \mathcal{L}_{\gzipf}(-\eta + p{}^{\sigma^{-1}}(zw_0\eta)).
\end{equation} 
Since $\eta$ is defined over $\mathbb{F}_p$ we see that ${}^{\sigma^{-1}}\eta = \eta$ and since $z=w_0$ and $w_{0}^2=1$ we thus obtain that
\begin{equation}
    \psi^{*}(\call_{\sbt}(-\eta, w_0\eta)) = \call_{\gzipf}(-\eta + p\eta)= \call_{\gzip}(-\eta + p\eta) = \call_{\gzip}((p-1)\eta).
\end{equation}
Hence,
\begin{equation}
    \omega^{p-1} = \zeta^{*}\call_{\gzip}((p-1)\eta) = \zeta^{*}\psi^{*}(\call_{\sbt}(-\eta, w_0\eta)).
\end{equation}
Since both $H^0(\sbt,\call_{\sbt}(-\eta,w_0\eta))$ and $H^0(\gzip^{\mu},\call_{\gzip}((p-1)\eta))$ are one-dimensional, we see that there exists a section $h_{\sbt}$ of $\call_{\sbt}(-\eta,w_0\eta)$ such that
\begin{equation}
    h = \zeta^{*}h_{\gzip}= \zeta^{*}\psi^{*}h_{\sbt}.
\end{equation}
Thus the conclusion follows if we show the corresponding statement for $h_{\sbt}$.

To this end, note that
\begin{equation}
    \prod_{i=1}^{n}\mathbb{P}_k^1 \cong \prod_{i=1}^{n}\frac{SL_{2,k}}{SL_{2,k}\cap B_l} \cong G'/B' \hookrightarrow G_k/B_k \hookrightarrow G''/B'' \cong \prod_{i=1}^{n}GL_{2,k}/B_l \cong \prod_{i=1}^{n} \mathbb{P}_k^1,
\end{equation}
so $G_k/B_k \cong \prod_{i=1}^{n} \mathbb{P}_k^1$, 
and if we combine (\ref{ind.tensor}) and Lemma \ref{res.to.sl} we see that $H^0(w_0\eta)$ is the global sections of the line bundle $\calo(1,1,...,1)$ on $\prod_{i=1}^{n} \mathbb{P}_k^1$. Let $[x_{i0} : x_{i1}]$ denote the projective coordinates on the $i^{\text{th}}$ component. An element of $H^0(w_0\eta)$ is then a bihomogenous polynomial, $f=f(x_{10},x_{11} ; x_{20},x_{21} ; ... ; x_{n0}, x_{n1})$ of bidegree $(1,1,...,1)$, and the action of $b = (b_i)\in B_k$ is given by
\begin{equation}
    b\cdot f (x_{10},x_{11} ; x_{20},x_{21} ; ... ; x_{n0}, x_{n1}) = f(b_{1}^{-1}\begin{bmatrix} x_{10}\\x_{11}\end{bmatrix} ; ..., b_{n}^{-1} \begin{bmatrix} x_{n0}\\x_{n1} \end{bmatrix}).
\end{equation}
Hence, using Lemma \ref{H0.sbt} we see that $h_{\sbt} \in H^{0}(\text{Sbt}, \call_{\sbt}(-\eta, w_0\eta))=H^0(w_0\eta)_{\eta}$, up to a scalar, is given by
\begin{equation}\label{expl.descr.Hasse}
    h_{\sbt} = \prod_{i=1}^{n}x_{i0}.
\end{equation}
Furthermore, under the identification $G_k/B_k\cong \prod_{i=1}^{d} \mathbb{P}_k^1$, the Schubert cell corresponding to the element $w=(x_1, ..., x_d) \in W$ is the subscheme $C(w) = \prod_{i=1}^{d}C_i$ where 
\begin{equation}
    C_i = \begin{cases} \mathbb{A}^1 & x_i = -1 \\
    \{ [0:1] \} & x_i = 1
    \end{cases}.
\end{equation}
Thus we see that $h_{\sbt}$ vanishes exactly to order $\codim \overline{C(w)} = |\{ i : x_i = 1 \} = d - l(w)$ on $\sbt_w$, where $l(w)$ denotes the length of $w$. Since $h= \zeta^{*}\psi^{*}h_{\sbt}$ we obtain the desired result.
\end{proof}
\begin{remark}
This finishes the proof of the equivalence of 1. and 2. in Theorem \ref{main.theorem}.
\end{remark}
\begin{remark}\label{partial.hasse}
 Note that the decomposition of $h_{\sbt}$ in equation (\ref{expl.descr.Hasse}) produces sections $x_{i0}$ which are pulled back to the partial Hasse invariants (as defined e.g. in \cite{goren} or \cite{wushi.invent}). From this description, it is 
clear that their zero loci indeed are reduced, and that the order of vanishing of points where they vanish is 1.
\end{remark}
\begin{remark}\label{w.r.t.goren}
Let us further discuss the close relation of this result with the work by Goren \cite{goren}. In it, Goren constructs partial Hasse invariants which factorise the Hasse invariant $h$ studied in this text, and the zero locus of each such partial Hasse invariant is the closure of a codimension 1 Ekedahl--Oort stratum. Since the Hilbert modular variety is smooth, the order of vanishing is additive on functions and one could study the order of vanishing of each partial Hasse invariant to obtain our result. This is not done in \cite{goren}. Another approach, using the work in ibid, could be to use the knowledge of the order of vanishing of $h$ in codimension 1 to obtain results in higher codimension. 
However, the order of vanishing of sections is in general not additive on intersections; 
a simple example can be found by looking at $f(x,y,z)=(x+y)xy \in \mathbb{C}[x,y,z]$ and consider its order of vanishing on $V(x)$, $V(y)$ and $V(x)\cap V(y)=V(x,y)$ respectively. Thus, it is not clear a priori from the results of Goren, what happens with the order of vanishing of the Hasse invariant on intersections of Ekedahl--Oort strata, and this is not discussed in ibid. We rigorously prove that, indeed, the order of vanishing is what one might ``hope'' from the work of Goren. 
\end{remark}
\subsection{The Hasse invariant and the filtrations}
In this section we prove the relation between the order of vanishing of the Hasse invariant at a point $s \in S$ and the relative position of the Hodge and conjugate filtrations, as stated in Theorem \ref{main.theorem}. 
For simplicity let $H_{\dr}^{j}\coloneqq H_{\dr}^{j}(A/S)$ for all $j$, and let $A_s$ denote the fiber of $s$. For any locally free sheaf $\mathcal{E}$ on $S$, let $\mathcal{E}(s)$ denote the fiber at $s$. We prove the following.
\begin{theorem}
Keep the notation as above. The Hasse invariant vanishes to order $m$ at $s\in S$ if and only if $m$ is the largest integer such that ${}_{\Conj}\fil_0H_{\dr}^{n}(A_s/k) \subseteq \fil^{m}H_{\dr}^{n}(A_s/k)$.
\end{theorem}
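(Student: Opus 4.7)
The plan is to leverage Theorem~\ref{hasse.vanish}, which identifies $\ord_s(h)$ with the codimension of the Ekedahl--Oort stratum containing $s$, and then reduce the statement to a direct linear-algebra computation on the two filtered pieces of $H^n_{\dr}(A_s)$ using the $\mathcal{O}_F$-isotypic decomposition. In short, it suffices to show that the codimension of the E--O stratum at $s$ equals the largest integer $m$ with ${}_{\Conj}\fil_0 H^n_{\dr}(A_s) \subseteq \fil^m H^n_{\dr}(A_s)$.

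First I would unpack the relevant structures. Since $p$ is unramified in $F$, there is a splitting $\mathcal{O}_F\otimes_{\mathbb{Z}}k = \prod_\tau k$ indexed by the embeddings $\tau\colon \mathcal{O}_F\hookrightarrow k$, and so the $\mathcal{O}_F$-action on $H^1_{\dr}(A_s)$ produces a decomposition $H^1_{\dr}(A_s) = \bigoplus_\tau M_\tau$ with each $M_\tau$ two-dimensional over $k$. Both filtrations respect this decomposition, giving lines $L_\tau, N_\tau\subseteq M_\tau$ with $\omega_s = \bigoplus_\tau L_\tau$ and ${}_{\Conj}\fil_0 H^1_{\dr}(A_s) = \bigoplus_\tau N_\tau$. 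Since both filtrations on $H^n_{\dr}(A_s) = \bigwedge^n H^1_{\dr}(A_s)$ are induced from the corresponding filtrations on $H^1_{\dr}$ via the wedge product (both are compatible with the cup product on $H^\bullet_{\dr}$), the Hodge filtration $\fil^m H^n_{\dr}(A_s)$ is spanned by pure wedges with at least $m$ factors in $\omega_s$, and ${}_{\Conj}\fil_0 H^n_{\dr}(A_s)$ is the line spanned by $\bigwedge_\tau n_\tau$ for any choice of nonzero $n_\tau\in N_\tau$.

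Next I would compute the largest $m$ explicitly. Let $T\coloneqq \{\tau : N_\tau \neq L_\tau\}$. For $\tau\in T$ the two lines are transverse in $M_\tau$, so $M_\tau = L_\tau\oplus N_\tau$; fix $l_\tau\in L_\tau\setminus\{0\}$ and write $n_\tau$ in the $N_\tau$-component. For $\tau\notin T$, set $l_\tau \coloneqq n_\tau\in L_\tau$. Expanding $\bigwedge_\tau n_\tau$ in coordinates adapted to the decomposition $L_\tau\oplus N_\tau$ (for $\tau\in T$), a short check shows that the only term with nonzero coefficient is the pure wedge whose factor is $l_\tau$ for $\tau\notin T$ and $n_\tau$ for $\tau\in T$; this wedge has exactly $|T^c|$ factors in $\omega_s$, and its image in the graded piece $\bigwedge^{|T^c|}\omega_s\otimes\bigwedge^{|T|}(H^1_{\dr}(A_s)/\omega_s)$ is nonzero because each residue $n_\tau\bmod L_\tau$ is nonzero for $\tau\in T$. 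Hence $\bigwedge_\tau n_\tau\in \fil^{|T^c|}\setminus \fil^{|T^c|+1}$, and the largest $m$ with ${}_{\Conj}\fil_0 H^n_{\dr}(A_s)\subseteq \fil^m H^n_{\dr}(A_s)$ is $|T^c| = |\{\tau : L_\tau = N_\tau\}|$.

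Finally I would identify $|\{\tau : L_\tau = N_\tau\}|$ with $|\{i : x_i = 1\}|$, which by Theorem~\ref{hasse.vanish} equals the codimension of the E--O stratum containing $s$. Tracing Zhang's morphism $\zeta$ for Hilbert--Blumenthal abelian varieties, the $G$-zip attached to $A_s$ is determined (in each $\tau$-factor of $G_k/B_k\cong \prod_\tau\mathbb{P}^1$) by the relative position of $L_\tau$ and $N_\tau$ in $M_\tau$, and the closed $B$-fixed point $[0\!:\!1]$ in the $\tau$-th factor is precisely the locus where the conjugate and Hodge lines coincide. Combined with the description of the Schubert cells in the proof of Theorem~\ref{hasse.vanish}, this gives $x_\tau = 1 \Longleftrightarrow L_\tau = N_\tau$, completing the proof. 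The main obstacle is this last step: it requires careful bookkeeping of Zhang's construction of $\zeta$ in terms of the Frobenius and Verschiebung on $H^1_{\dr}(A_s)$, and amounts to checking that two a priori distinct parametrizations of the E--O stratification of the Hilbert modular variety agree factor by factor.
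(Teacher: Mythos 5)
Your proposal is correct in spirit but routes through a different equivalence than the paper does, and in doing so it trades a clean computation for a bookkeeping obstacle that the paper sidesteps entirely.

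The linear-algebra core of your argument (decompose $H^1_{\dr}(A_s)$ by $\mathcal{O}_F$-action into two-dimensional pieces $M_\tau$ with Hodge line $L_\tau$ and conjugate line $N_\tau$, and observe that ${}_{\Conj}\fil_0 H^n_{\dr}(A_s) = \bigwedge_\tau N_\tau$ lies in $\fil^m\setminus\fil^{m+1}$ for $m = \lvert\{\tau : L_\tau = N_\tau\}\rvert$) is essentially the same computation the paper carries out via the graded-piece decomposition of $\bigwedge^n H^1_{\dr}$. Where you diverge is in how you connect this count to the order of vanishing. You prove the equivalence $1\Leftrightarrow 3$ and then invoke Theorem~\ref{hasse.vanish} ($1\Leftrightarrow 2$), which forces you to identify the Schubert-cell index $x_\tau$ with the relative position of $L_\tau$ and $N_\tau$ by unwinding Zhang's classifying morphism $\zeta$ and the chain $\zeta$, $\pi$, $\psi$ down to $\sbt$. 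You correctly flag this as the main obstacle, but you do not carry it out, and it is a real piece of work: the $G$-zip datum attached to $A_s$ involves the full triple of torsors and the Frobenius-twisted isomorphism $\iota$, not just the two lines, so showing it reduces factor-by-factor to the relative position of $L_\tau$ and $N_\tau$ (and matching the sign/convention so that $[0:1]$ corresponds to $L_\tau = N_\tau$) requires tracing the construction carefully, including the Frobenius twist on the index in the inert case.

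The paper avoids this entirely: it proves $2\Leftrightarrow 3$ directly, without any reference to the $\zeta$ map or the E--O stratification. The bridge is the factorization $h = \bigotimes_{i=1}^n h_i$ of the Hasse invariant into partial Hasse invariants, where each $h_i$ is by definition the composite $M_i^{(p)}\xrightarrow{\sim}({}_{\Conj}\fil_0)_i\hookrightarrow(H^1_{\dr})_i\twoheadrightarrow M_i$. Then $h_i(s) = 0$ iff $N_\tau\subseteq L_\tau$ iff $N_\tau = L_\tau$ is immediate from the definition, additivity of $\ord_s$ on the smooth scheme $S$ gives $\ord_s(h) = \sum_i\ord_s(h_i)$, and the only geometric input needed is $\ord_s(h_i)\in\{0,1\}$, which falls out of the explicit description $h_{\sbt}=\prod x_{i0}$ (Remark~\ref{partial.hasse}). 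This is a tighter route: the identification you would need to make ($x_\tau = 1 \Leftrightarrow L_\tau = N_\tau$) is a corollary of the paper's two results rather than an input to them. If you want to complete your proof as stated, the missing step is equivalent in content to the assertion in Remark~\ref{partial.hasse} that the $x_{i0}$ pull back to the classical partial Hasse invariants, so you may as well use that directly and run the paper's shorter argument.
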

\begin{proof}
Given the Hodge filtration $\fil^{\bullet}$ on $H_{\dr}^1$, let $\Omega \coloneqq \fil^1$ and $M \coloneqq H_{\dr}^1/\Omega$. The filtration is then written as
\begin{equation}\label{hodge.fil.1}
    0 \to \Omega \to H_{\dr}^{1} \to M \to 0.
\end{equation}
If we let ${}_{\Conj}\fil_{\bullet}$ denote the conjugate filtration on $H_{\dr}^{1}$, and $(-)^{(p)}$ denote pullback via absolute Frobenius, then it is known that $M^{(p)} \cong {}_{\Conj}\fil_{0}$. We thus have a map $h^{1}$ as depicted in the following diagram:
\begin{equation}
    \begin{tikzcd}
    M^{(p)}\arrow[r, "\cong"] \arrow[rrr, bend left = 30, "h^{1}"] & {}_{\Conj}\fil_{0} \arrow[r, hook] & H_{\dr}^{1} \arrow[r, twoheadrightarrow] & M
    \end{tikzcd}
\end{equation}
Note that $h^{1} = 0$ if and only if ${}_{\Conj}\fil_0 \subseteq \Omega$, in which case we can view $h^{1}$ as a map from $M^{(p)}$ to $\Omega$.

The $n$ embeddings of $F$ into $\mr$ give a decomposition of $H_{\dr}^{1}$ into two-dimensional pieces $H_{\dr}^{1} = \oplus_{i=1}^{n} (H_{\dr}^{1})_{i}$ which give decompositions
\begin{equation}
    \Omega = \oplus_{i=1}^{n} \Omega_i, \,\,\, M = \oplus_{i=1}^{n} M_i, \,\,\, {}_{\Conj}\fil_{0} = \oplus_{i=1}^{n} ({}_{\Conj}\fil_{0})_i,
\end{equation}
where each factor has dimension 1. 
When $p$ is split, Frobenius induces, for each $i$, a map $h_i$ as depicted in the following diagram
\begin{equation}
    \begin{tikzcd}
    M_{i}^{(p)}\arrow[r, "\cong"] \arrow[rrr, bend left = 30, "h_i"] & ({}_{\Conj}\fil_{0})_i \arrow[r, hook] & (H_{\dr}^{1})_i \arrow[r, twoheadrightarrow] & M_i
    \end{tikzcd}
\end{equation}
and $h^{1} = \oplus_{i=1}^{n} h_i$ (when $p$ is inert, up to reordering of the indices, Frobenius induces maps $h_{i} \colon M_{i}^{(p)}\to M_{i+1}$ for $i=1,...,n-1$ and $h_n \colon M_{n}^{(p)}\to M_1$. We still have that $h^{1} = \oplus_{i=1}^{n}h_i$ and the argument below will be the same). The $h_i$ are the partial Hasse invariants, and we note that $h_i =0$ if and only if $({}_{\Conj}\fil_{0})_i \subseteq \Omega_i$.

The Hodge filtration on $H_{\dr}^{n}= \bigwedge^{n}H_{\dr}^{1}$ is induced from (\ref{hodge.fil.1}); the $k^{\text{th}}$ piece is given by
\begin{equation}
    \fil^{k}H_{\dr}^{n} = \im\Big(\Omega^{\otimes k}\otimes (H_{\dr}^{1})^{\otimes (n-k)} \to (H_{\dr}^{1})^{\otimes n} \to \bigwedge^{n} H_{\dr}^{1} \Big).
\end{equation}
Since the Hodge and conjugate filtrations are locally direct factors the $k^{\text{th}}$ graded piece is given by
\begin{equation}\label{graded.piece}
\begin{aligned}
    \gr^{k}\fil^{\bullet}H_{\dr}^{n} &= \bigwedge^{k} \Omega \otimes \bigwedge^{n-k} M \\
    &= \bigoplus_{i_1<... < i_k}\bigoplus_{j_1<... < j_{n-k}} \Omega_{i_1}\otimes\cdots\otimes \Omega_{i_k} \otimes M_{j_1}\otimes \cdots \otimes M_{j_{n-k}}.
\end{aligned}
\end{equation}
We similarly note that ${}_{\Conj}\fil_{0}H_{\dr}^{n} = \bigwedge^{n} {}_{\Conj}\fil_{0}H_{\dr}^{1}$. 
The Hasse invariant is the map $h \coloneqq \det h^{1}$, depicted in the following diagram
\begin{equation}\label{hasse.diag}
    \begin{tikzcd}
    \otimes_{i=1}^{n} M_{i}^{(p)} \arrow[r, "\cong"] \arrow[rrrdd, bend right = 20, "h=\otimes_{i=1}^{n} h_i"] &[-1em] \Big(\gr^{0}\fil^{\bullet}H_{\dr}^{n}\Big)^{(p)} \arrow[r, "\cong"] &[-1em] 
    {}_{\Conj}\fil_{0}H_{\dr}^{n} \arrow[r, hook] &[-1em]
    H_{\dr}^{n} \arrow[d, twoheadrightarrow] \\
     & & & \gr^{0}\fil^{\bullet}H_{\dr}^{n} \arrow[d, "\cong"] \\
     & & & \otimes_{i=1}^{n} M_i
    \end{tikzcd}.
\end{equation}
Since $S$ is smooth we see from (\ref{hasse.diag}) that
\begin{equation}
    \ord_s(h)=\sum_{i=1}^{n}\ord_s(h_i),
\end{equation}
and since $\ord_s(h_i)\in \{0,1\}$ for all $i$ (see Remark \ref{partial.hasse}) we see that the order of vanishing of $h$ at $s$ is equal to the number of $i$ such that $h_i(s) = 0$. 
Thus, $h$ vanishes to order $m$ at $s$ if and only if there are $i_1< ... < i_m$ such that $h_{i_l}(s)=0$ for all $l=1,...,m$. Since the construction of the Hodge and conjugate spectral sequences commute with base change along $s\to S$, this means that each $h_{i_l}(s)$ can be viewed as a non-zero map from $M_{i_{l}}^{(p)}(s)$ to $\Omega_{i_{l}}(s)$, and from (\ref{hasse.diag}) we see that this happens if and only if 
\begin{equation}
    {}_{\Conj}\fil_{0}H_{\dr}^{n}(A_s/k) = (\otimes_{l=1}^{m}\Omega_{i_l}(s)) \otimes (\otimes_{l=m+1}^{n}M_{i_l}(s)),
\end{equation}
where we let $i_{m+1}, ..., i_n$ be numbers in $\{ 1,...,n \}\setminus \{ i_1, ..., i_m \}$ with $i_{l+1}<...<i_n$. Looking at (\ref{graded.piece}) we see that this happens if and only if $m$ is the largest integer such that ${}_{\Conj}\fil_0H_{\dr}^{n}(A_s/k) \subseteq \fil^{m}H_{\dr}^{n}(A_s/k)$. 
\end{proof}
\begin{remark}
This finishes the proof of the equivalence between 2. and 3. in Theorem \ref{main.theorem} and thus concludes the whole theorem. 
\end{remark}

\section{Acknowledgements}
I would like to thank my advisor, Wushi Goldring, for coming up with the idea of this project and for many helpful discussions. 
I would also like to thank an anonymous referee for several useful comments that greatly improved the quality of the paper.
\newpage
\printbibliography 
\end{document}